\documentclass[12pt]{amsart}
\usepackage{graphicx}
\usepackage{amsmath}
\usepackage{subfigure}
\usepackage{amssymb} 
\usepackage[usenames,dvipsnames]{color}
\usepackage{pinlabel}
\usepackage{mathtools}
\usepackage{float}
\usepackage{tikz}
  \usetikzlibrary{hobby}
  \usetikzlibrary{patterns}

\newtheorem{thm}{Theorem}[section]

\newtheorem{lemma}[thm]{Lemma}

\newtheorem*{defin*}{Definition}

\newcommand{\bdd}{\mbox{$\partial$}}

\begin{document}  

\title{One Powell generator is redundant}   


\author{Martin Scharlemann}
\address{\hskip-\parindent
        Martin Scharlemann\\
        Mathematics Department\\
        University of California\\
        Santa Barbara, CA 93106-3080 USA}
\email{mgscharl@math.ucsb.edu}

\thanks{I am grateful to Michael Freedman for many inspiring and helpful conversations on various aspects of the Powell Conjecture.}

\date{\today}

\begin{abstract}  In 1980 J. Powell \cite{Po} proposed that five specific elements sufficed to generate the Goeritz group of any Heegaard splitting of $S^3$.  This conjecture remains unresolved for genus $g \geq 4$.  Here a short argument shows that one of his proposed generators is redundant, in fact a consequence of three of the other four.
\end{abstract}

\maketitle

Powell proposed generators for the genus $g$ Goeritz group, $g \geq 3$ as shown in Figure \ref{fig:PowellPic}.

\begin{figure}[ht!]
    \centering
    \includegraphics[scale=1]{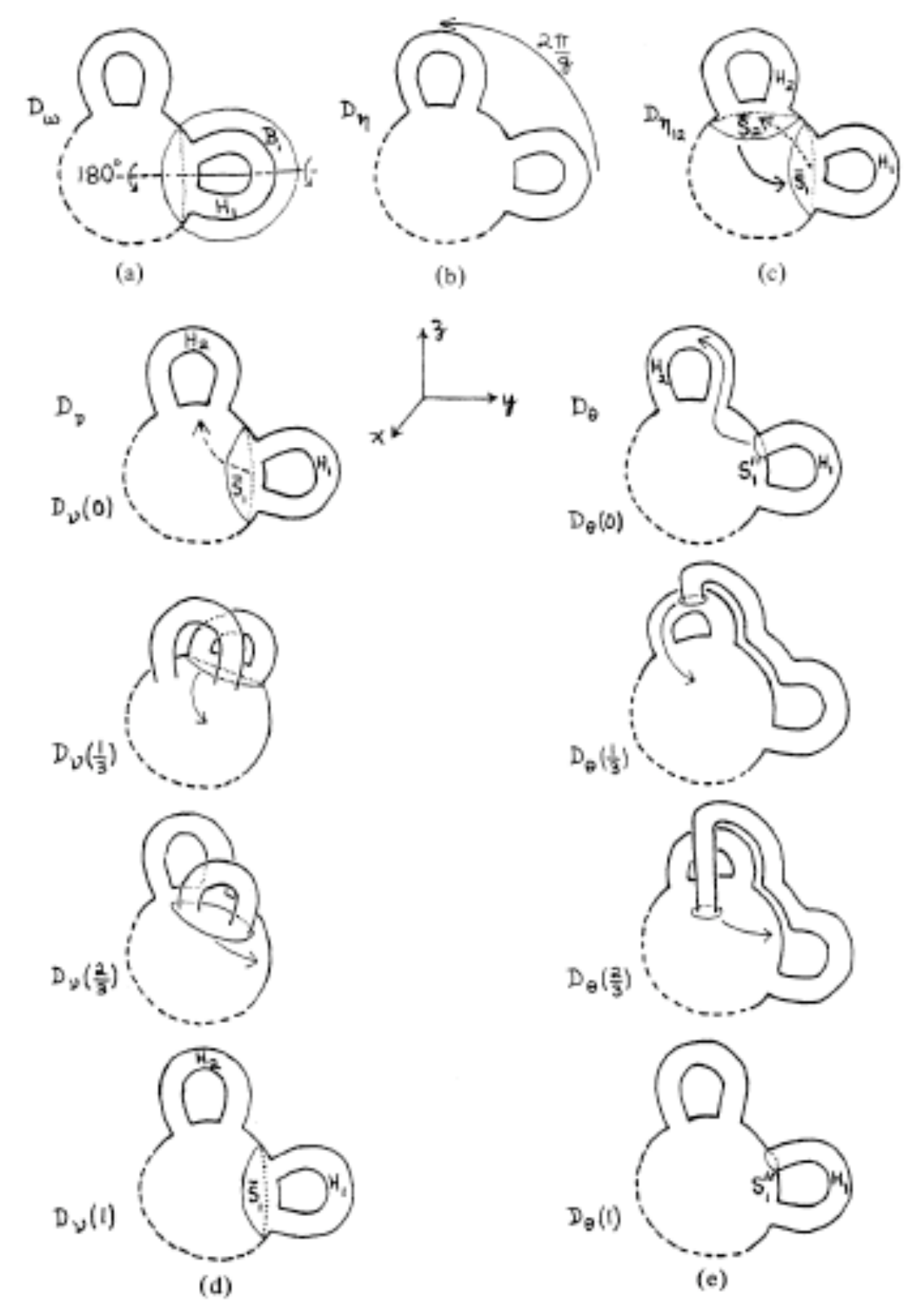}
    \caption{Powell's proposed generators}
    \label{fig:PowellPic}
    \end{figure}
    
It was noted in \cite[Lemma 1.4]{FS} that the first standard genus $1$ summand of the standardly embedded genus $g$ Heegaard surface of $S^3$ can be passed around a {\em longitude} of the second standard summand by the use of Powell's generators.  Since Powell's element $D_\nu$ similarly passes the first summand around the {\em meridian} of the second generator, it followed almost immediately that an isotopy of any standard summand around its complementary surface,  and back to its initial position, is a product of Powell's generators.  What was not noticed there, and is shown here, is that a version of the same argument, appropriately dualised, shows that Powell's $D_\nu$ is itself also a consequence of the other generators, so that Powell's actual conjecture would imply that only four generators suffice.  

The proof is mostly a sequence of pictures, modeled on those underlying the proof of \cite[Lemma 1.4]{FS}.  It will be convenient to display how to create not $D_\nu$ itself, but rather the conjugate of  $D_\nu$ by generator $D_{\eta_{12}}$, which we will denote $D'_\nu$.  This is the move which isotopes standard summand $H_2$ around a meridian of $H_1$, instead of vice versa and is illustrated in Figure \ref{fig:Dnucon}.  

\begin{figure}[ht!]
    \centering
    \includegraphics[scale=0.7]{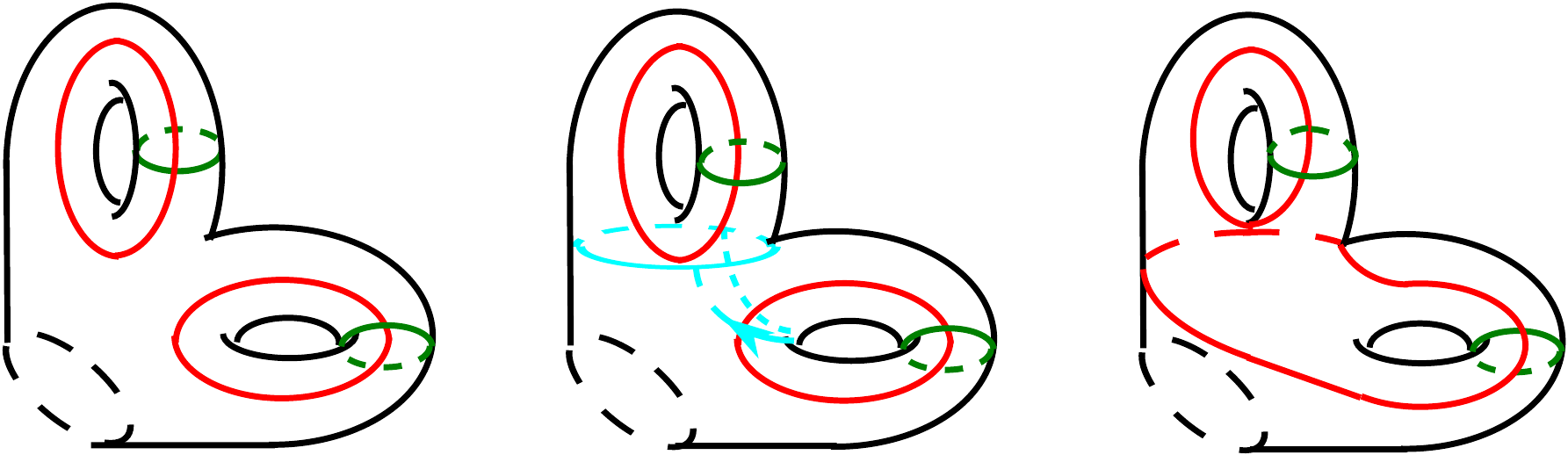}
    \caption{$D'_\nu = D_{\eta_{_{12}}} D_\nu D^{-1}_{\eta_{_{12}}}$}
    \label{fig:Dnucon}
    \end{figure}

The first image shows the meridians and longitudes of the two standard summands.  The second image shows in blue the trajectory through which the top standard summand is moved with respect to the right summand:  it is passed behind and then around the meridian of the right summand.  The effect on the meridians and the longitudes of this isotopy is shown in the third image.

Note in Figure \ref{fig:PowellPic} that the Powell generator $D_\omega$ can be described as a half-twist of the right summand shown in the images in Figure \ref{fig:Dnucon}.  The conjugate $D_{\eta_{_{12}}} D_\omega D^{-1}_{\eta_{_{12}}}$ is then the half-twist of the top summand.  Denote this move by $D'_\omega$.  Recall also from Figure \ref{fig:PowellPic} that $D_{\theta}$ denotes the move that slides the right handle over the top handle.  

\begin{lemma} \label{lemma:redund} The product $D'^{-1}_\omega D^{-1}_\theta D'_\omega D^{-1}_\theta$ has the same effect on the meridians and longitudes of the first two standard summands as $D'_\nu$ does.
\end{lemma}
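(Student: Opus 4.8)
The plan is to verify the claimed equality by tracking the effect of each move on the meridian–longitude system $(m_1, \ell_1, m_2, \ell_2)$ of the first two standard summands, exactly in the spirit of the pictorial argument for \cite[Lemma 1.4]{FS}. Since $D'_\nu$ is defined (Figure \ref{fig:Dnucon}) by its action on these four curves, it suffices to compute the composite action of the four factors $D_\theta^{-1}$, $D'_\omega$, $D_\theta^{-1}$, $D'^{-1}_\omega$ applied in the correct order and check that the resulting images of $m_1, \ell_1, m_2, \ell_2$ agree, up to isotopy, with those produced by $D'_\nu$. Throughout I would keep careful track of orientation conventions, since the whole content of the lemma is a cancellation of the "sliding" parts of the moves against each other, leaving only the net effect of passing $H_2$ around the meridian of $H_1$.

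First I would record the individual actions. The handle slide $D_\theta$ slides the right handle over the top handle, so it fixes $m_1, \ell_1$ and sends $m_2 \mapsto m_2$, $\ell_2 \mapsto \ell_2 \pm \ell_1$ (a Dehn-twist-type shearing of the longitude of the second summand by a longitude of the first, with the sign fixed by the figure). The half-twist $D'_\omega$ of the top summand swaps or negates $m_1$ and $\ell_1$ while fixing the second summand; I would read off its precise matrix from the description of $D_\omega$ in Figure \ref{fig:PowellPic} together with the conjugation by $D_{\eta_{12}}$. The strategy is then to compose, in order, $D_\theta^{-1}$, then $D'_\omega$, then $D_\theta^{-1}$, then $D'^{-1}_\omega$, and simplify: the two half-twists $D'_\omega$ and $D'^{-1}_\omega$ are inverse rotations of the top summand, and the commutator-like arrangement $D'^{-1}_\omega D_\theta^{-1} D'_\omega D_\theta^{-1}$ is precisely the algebraic shadow of the geometric trajectory drawn in blue in Figure \ref{fig:Dnucon}: conjugating the handle slide by the quarter-turn converts "slide over" into "slide behind," and composing the two sheared slides realizes the passage of $H_2$ once around the meridian of $H_1$.

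The cleanest way to present this is as a short sequence of figures mirroring Figure \ref{fig:Dnucon}: draw the meridian–longitude picture, apply $D_\theta^{-1}$, then $D'_\omega$, then $D_\theta^{-1}$, then $D'^{-1}_\omega$, and observe in the final panel that $m_1, \ell_1, m_2, \ell_2$ have been carried to exactly the curves in the third panel of Figure \ref{fig:Dnucon}. I would draw the intermediate configurations so that the cancellation is visually evident — in particular, after the first three moves the top summand should already be in the "passed behind" position, and the final half-twist brings everything back so that only the meridian-circumnavigation survives. As an algebraic check to accompany the pictures, one can write the four moves as $2\times 2$ (or $4\times 4$) integer matrices acting on $(m_1,\ell_1,m_2,\ell_2)$ and confirm that their product equals the matrix of $D'_\nu$; this guards against sign errors that are easy to make in the hand-drawn pictures.

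I expect the main obstacle to be bookkeeping of signs and orientations rather than any conceptual difficulty: the two occurrences of $D_\theta^{-1}$ are the \emph{same} slide but are seen in different frames (before and after the half-twist $D'_\omega$), and getting the conjugation $D'_\omega D_\theta^{-1} D'^{-1}_\omega$ to read correctly as "slide behind" — with the half-twist genuinely reversing the side on which $H_2$ passes $H_1$ — is where a sign slip would silently produce the wrong trajectory (e.g.\ circling the meridian in the wrong direction, or picking up an extra twist). To control this I would fix an orientation for each of $m_i, \ell_i$ at the outset, commit to the specific chirality of the half-twist $D_\omega$ shown in Figure \ref{fig:PowellPic}, and check the final matrix against the independently-computed action of $D'_\nu$ read off from Figure \ref{fig:Dnucon}; agreement of all four image curves then yields the lemma.
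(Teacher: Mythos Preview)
Your proposal is correct and follows essentially the same approach as the paper: the paper's proof is purely pictorial, tracking the meridian--longitude system through the four moves $D^{-1}_\theta$, $D'_\omega$, $D^{-1}_\theta$, $D'^{-1}_\omega$ in sequence (Figure~\ref{fig:redund}) and observing that the final configuration matches that of $D'_\nu$ in Figure~\ref{fig:Dnucon}. Your suggested matrix check is a harmless addition but not present in the paper; otherwise the strategies coincide.
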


\begin{proof}  This is just a matter of watching what the composition does, as shown in Figure \ref{fig:redund}.  

\begin{figure}[ht!]
    \centering
    \includegraphics[scale=0.7]{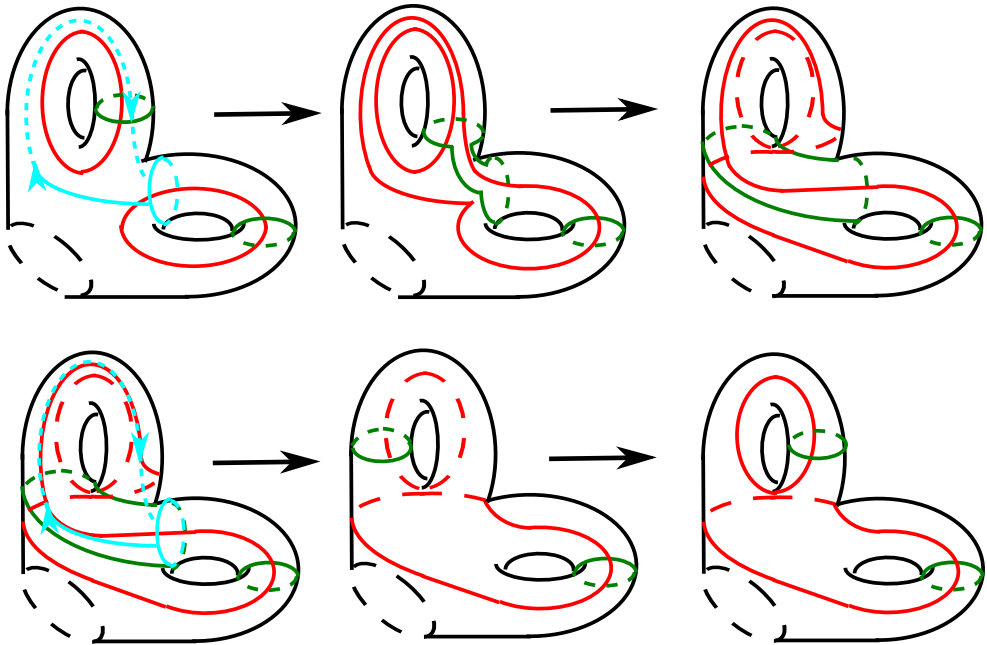}
    \caption{$D'^{-1}_\omega D^{-1}_\theta D'_\omega D^{-1}_\theta$}
    \label{fig:redund}
    \end{figure}
    
    The top row shows in blue, in the first image, the trajectory of $D^{-1}_\theta$ and then in the next image the result of this action on the meridians and longitudes.  The further effect of $D'_\omega$ is shown in the last image in the row.  The second row repeats the process for $D^{-1}_\theta$ again, and then $D'^{-1}_\omega$.
\end{proof}

\begin{thm} Powell's generator $D_\nu$ is a consequence of the three generators $D_\omega, D_{\eta_{_{12}}}, D_\theta$.   
\end{thm}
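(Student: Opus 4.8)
The plan is to upgrade the conclusion of Lemma \ref{lemma:redund}, which only compares two elements through their action on the meridians and longitudes, to an honest identity in the Goeritz group, and then to unwind the conjugations defining the primed moves. Write $W := D'^{-1}_\omega D^{-1}_\theta D'_\omega D^{-1}_\theta$ for the word appearing in the Lemma.

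First I would note that every move in play—$D'_\omega$, $D_\theta$, $D'_\nu$, and hence $W$—is supported on a regular neighborhood of the first two standard summands, acting as the identity on the remaining summands and on the complementary handlebodies away from that neighborhood. Thus the comparison of $D'_\nu$ with $W$ takes place entirely within the Goeritz group of the genus-two subsplitting. There the meridians $m_1,m_2$ and longitudes $\ell_1,\ell_2$ form a filling system: cutting the surface along all four curves produces a disk. By Lemma \ref{lemma:redund} the element $W^{-1}D'_\nu$ fixes each of these curves, with orientation, so by the Alexander method it is isotopic to the identity on the Heegaard surface; since the Goeritz group of $S^3$ injects into the mapping class group of the splitting surface, $W^{-1}D'_\nu$ is trivial. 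This promotes the Lemma to the group-level identity $D'_\nu = D'^{-1}_\omega D^{-1}_\theta D'_\omega D^{-1}_\theta$.

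Next I would substitute the defining conjugations $D'_\nu = D_{\eta_{12}} D_\nu D^{-1}_{\eta_{12}}$ and $D'_\omega = D_{\eta_{12}} D_\omega D^{-1}_{\eta_{12}}$ into that identity and conjugate throughout by $D^{-1}_{\eta_{12}}$ on the left and $D_{\eta_{12}}$ on the right. On the left the conjugations collapse to give $D_\nu$, while on the right the inner copies of $D_{\eta_{12}}$ partially cancel, leaving
\[
D_\nu = D^{-1}_\omega\,\bigl(D^{-1}_{\eta_{12}} D^{-1}_\theta D_{\eta_{12}}\bigr)\,D_\omega\,\bigl(D^{-1}_{\eta_{12}} D^{-1}_\theta D_{\eta_{12}}\bigr).
\]
This exhibits $D_\nu$ as an explicit word in $D_\omega$, $D_{\eta_{12}}$, $D_\theta$ and their inverses, which is exactly the assertion of the theorem.

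I expect the \emph{main obstacle} to be the first step, namely justifying that agreement of the two elements on the four meridians and longitudes forces their equality in the Goeritz group. Its content is twofold: that the chosen curve system is filling, so the Alexander method applies to the difference element on the surface, and that a Goeritz element which is isotopically trivial on the splitting surface is trivial in the group. The latter is the injectivity of the Goeritz group into the surface mapping class group, which for $S^3$ underlies the picture-based framework of \cite{FS} on which these figures are modeled; I would either cite it directly or, since our difference element is supported on only two summands and fixes a filling system there, carry out the reduction to the identity within that local genus-two model.
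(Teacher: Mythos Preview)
Your identification of the ``main obstacle'' is exactly right, but your resolution of it contains an error. The four curves $m_1,\ell_1,m_2,\ell_2$ do \emph{not} fill the genus-two piece: cutting the subsurface carrying the first two summands (a $\Sigma_{2,1}$) along these four curves leaves a pair of pants, not a disk; on the closed genus-two surface it would leave an annulus. You can see this by an Euler-characteristic count, or simply by noting that the regular neighborhood of $m_i\cup\ell_i$ has a single boundary circle (the curve separating summand $i$), so after removing both neighborhoods from $\Sigma_{2,1}$ you are left with a three-holed sphere whose boundary components are the separating curves around summand $1$, around summand $2$, and around the pair of summands together. Consequently the Alexander method does not force $W^{-1}D'_\nu$ to be trivial; it only forces it to lie in the mapping class group, rel boundary, of that pair of pants, which is the free abelian group generated by the three boundary Dehn twists.

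This is precisely how the paper proceeds: rather than asserting $D'_\nu = W$, it observes that $D_\nu C^{-1}$ is supported on the pair of pants, hence is a product of Dehn twists along those three separating curves, and then notes that each such Dehn twist is already a power of one of $D_\omega$, $D'_\omega$, or $D_{\eta_{12}}$. So the correction factor, whatever it is, is itself a word in the three named generators, and the conclusion follows. Your unwinding of the conjugations is fine and makes the dependence on $D_{\eta_{12}}$ explicit, but to complete the argument you must replace the ``filling $\Rightarrow$ identity'' step by this pair-of-pants analysis. Note also that this route avoids any appeal to injectivity of the Goeritz group into the surface mapping class group.
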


\begin{proof}  Lemma \ref{lemma:redund} illustrates that $D_\nu$ and some consequence $C$ of the three named generators have the same effect on the meridians and longitudes of the first two standard summands (and {\it ipso facto} are both the identity outside the images shown).  It follows that $D_\nu C^{-1}$ is the identity except perhaps on the the complement (in the figures) of the standard two meridians and longitudes.  This complement is a pair of pants, and any automorphism, rel $\bdd$, of a pair of pants is a product of Dehn twists on annular neighborhoods of its three boundary components.  But any Dehn twist on such a boundary component can be realized as a power of one of $D_{\eta_{_{12}}}, D_\omega$ or $D'_\omega$.
\end{proof}

\end{document}